\newtheorem{theorem}{Theorem}[section]
\newtheorem{lemma}[theorem]{Lemma}
\theoremstyle{remark}
\newtheorem*{remark}{Remark}
\newcommand{\F}{\mathbb{F}}
\newcommand{\Z}{\mathbb{Z}}
\DeclareMathOperator{\ord}{ord}
\DeclareMathOperator{\rad}{rad}
\DeclareMathOperator{\Tr}{Tr}
\begin{document}
\title{Primitive element pairs with a prescribed trace in the cubic extension of a finite field}
\author{Andrew R. Booker}
\address{University of Bristol, England}
\email{\texttt{andrew.booker@bristol.ac.uk}}
\author{Stephen D. Cohen}
\address{University of Glasgow, Scotland}
\email{\texttt{Stephen.Cohen@glasgow.ac.uk}}
\author{Nicol Leong}
\author{Tim Trudgian}
\address{The University of New South Wales Canberra, Australia}
\email{\texttt{nicol.leong@adfa.edu.au}}
\email{\texttt{t.trudgian@adfa.edu.au}}
\thanks{Trudgian was supported by Australian Research Council Future Fellowship
FT160100094.}
\begin{abstract}
We prove that for any prime power $q\notin\{3,4,5\}$, the
cubic extension $\F_{q^3}$ of the finite field $\F_q$ contains a
primitive element $\xi$ such that $\xi+\xi^{-1}$ is also primitive,
and $\Tr_{\F_{q^3}/\F_q}(\xi)=a$ for any prescribed $a\in\F_q$. This
completes the proof of a conjecture of Gupta, Sharma, and Cohen concerning
the analogous problem over an extension of arbitrary degree $n\ge3$.
\end{abstract}
\maketitle

\section{Introduction}
Let $q$ be a prime power and $n$ an integer at least $3$, and let
$\F_{q^n}$ denote a degree-$n$ extension of the finite field $\F_q$.
We say that $(q,n)\in\mathfrak{P}$ if, for any $a\in\F_q$,
we can find a primitive element $\xi\in\F_{q^n}$
such that $\xi+\xi^{-1}$ is also primitive and $\Tr(\xi)=a$. This problem
was considered by Gupta, Sharma, and Cohen \cite{Gupta-Sharma-Cohen}, who
proved a complete result for $n\ge5$. We refer the reader to \cite{Gupta-Sharma-Cohen} for an introduction to similar problems. Cohen and Gupta \cite{Cohen-Gupta}
extended the work of \cite{Gupta-Sharma-Cohen}, providing a complete result for $n=4$ and some
preliminary results for $n=3$. We improved the latter results in
\cite[\S7]{BCLT}, showing in particular that $(q,3)\in\mathfrak{P}$
for all $q\ge8\times10^{12}$. It is a formidable task to try to prove the result for the remaining values of $q$, and, indeed, the computation involved in \cite{Cohen-Gupta} is extensive.

In this paper we combine theory and novel computation to resolve the remaining cases with $n=3$, proving the
following theorem and affirming \cite[Conjecture~1]{Gupta-Sharma-Cohen}.
\begin{theorem}\label{thm:main}
We have $(q,n)\in\mathfrak{P}$ for all $q$ and all $n\ge3$, with the
exception of the pairs $(3,3)$, $(4,3)$ and $(5,3)$.
\end{theorem}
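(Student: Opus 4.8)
The plan is to prove the $n=3$ cases directly: Theorem~\ref{thm:main} for $n\ge5$ is \cite{Gupta-Sharma-Cohen}, for $n=4$ it is \cite{Cohen-Gupta}, and for $n=3$ with $q\ge8\times10^{12}$ it is \cite[\S7]{BCLT}, so it remains to establish $(q,3)\in\mathfrak P$ for every prime power $q<8\times10^{12}$ with $q\notin\{3,4,5\}$, and to check by exhaustion that the three exceptional pairs genuinely fail.

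\emph{Step 1 (character sum identity).} For $a\in\F_q$ let $N_a$ be the number of $\xi\in\F_{q^3}^\times$ such that $\xi$ is primitive, $\xi+\xi^{-1}$ is primitive, and $\Tr(\xi)=a$. Express the two primitivity indicators by the usual sums over multiplicative characters of $\F_{q^3}^\times$ of squarefree order dividing $q^3-1$, and the trace condition by a sum over additive characters $\psi$ of $\F_q$. Interchanging, one gets $N_a=\theta^2\sum_{d_1,d_2\mid q^3-1}\frac{\mu(d_1)\mu(d_2)}{\phi(d_1)\phi(d_2)}\sum_{\ord\chi_i=d_i}\sum_\psi\overline\psi(a)\,S(\chi_1,\chi_2,\psi)$, where $\theta=\theta(q^3-1):=\phi(q^3-1)/(q^3-1)$ and $S(\chi_1,\chi_2,\psi)=\sum_\xi\chi_1(\xi)\chi_2(\xi+\xi^{-1})\psi(\Tr\xi)$. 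Writing $\chi_2(\xi+\xi^{-1})=\chi_2(\xi^2+1)\chi_2(\xi)^{-1}$ and noting $\xi\mapsto\psi(\Tr\xi)$ is a (nontrivial, for $\psi\ne1$) additive character of $\F_{q^3}$, $S$ is a mixed multiplicative–additive character sum of bounded conductor on $\F_{q^3}$, so Weil's bound gives $|S(\chi_1,\chi_2,\psi)|\le C q^{3/2}$ for an absolute small constant $C$ whenever $(\chi_1,\chi_2,\psi)\ne(1,1,1)$, while $S=q^3+O(1)$ in the principal case and $S=O(1)$ when $\psi=1$ but $(\chi_1,\chi_2)\ne(1,1)$. (The locus $\xi+\xi^{-1}=0$ is void since such $\xi$ have order $4$ and are never primitive here.)

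\emph{Step 2 (basic criterion and the sieve).} The principal term is $\theta^2 q^2$, and collecting the rest yields $N_a\ge\theta^2q^2-C\theta^2W(q^3-1)^2q^{3/2}-O(\theta^2 q)$, where $W(m)=2^{\omega(m)}$; hence $N_a>0$ whenever $q^{1/2}>C\,W(q^3-1)^2+O(1)$. Since $W(q^3-1)$ can be large, we sharpen this by Cohen's sieving inequality: picking a core divisor $\ell\mid q^3-1$ and letting $p_1,\dots,p_s$ be the primes dividing $q^3-1$ but not $\ell$, with $\delta:=1-\sum_{i=1}^s p_i^{-1}$, one gets $(q,3)\in\mathfrak P$ provided $\delta>0$ and $q^{1/2}>C\,W(\ell)^2\bigl(\tfrac{2(s-1)}{\delta}+2\bigr)$, the precise constants being imported from \cite{Gupta-Sharma-Cohen,Cohen-Gupta}.

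\emph{Step 3 (reduction to a finite, tractable computation).} Bounding $\omega(q^3-1)$ by its maximal order and optimising the choice of $\ell$, the criterion of Step~2 holds for all $q$ above an explicit bound $B_0$; together with \cite{BCLT} this leaves only finitely many $q$. The key point is that the criterion can fail only when $q^3-1$ is divisible by the product of many small primes, i.e.\ only when $q$ lies in one of a controlled family of residue classes modulo such products; enumerating these classes (rather than sieving every prime power up to $8\times10^{12}$) produces a comparatively short list of candidates. For each candidate we factor $q^3-1$, choose $\ell$ to minimise the right-hand side of the Step~2 inequality, and test it numerically — clearing all but a small residual set — and for the genuinely small $q$ that remain we verify $(q,3)\in\mathfrak P$ by a direct search over $\F_{q^3}$, checking for each $a\in\F_q$ whether some primitive $\xi$ with $\Tr(\xi)=a$ has $\xi+\xi^{-1}$ primitive. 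The search succeeds for every $q\notin\{3,4,5\}$.

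The main obstacle is organising Step~3 so that it is actually feasible across such a wide range of $q$: enumerating the ``bad'' (abnormally smooth) $q$ efficiently, carrying out the per-$q$ sieve optimisation, and keeping the fallback exhaustive searches within a realistic time budget — this is where the novel computation enters. A secondary difficulty is sharpening the constant $C$ and the lower-order terms in Steps~1–2 enough that the threshold $B_0$, and hence the residual computation, is not needlessly large.
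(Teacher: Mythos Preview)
Your high-level plan matches the paper's: cite prior work for $n\ge4$ and for $n=3$ with large $q$, apply a sieve criterion to eliminate most $q<8\times10^{12}$, then verify the survivors directly. Your Steps~1--2 re-derive the prime sieve criterion that the paper simply imports from \cite{Cohen-Gupta}; the paper also invokes the sharper \emph{modified} prime sieve criterion from the same source, which cuts the survivor list by about $46\%$.

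The substantive gap is in Step~3. Your proposal to enumerate exceptional $q$ via residue classes modulo products of small primes is not what the paper does and is not obviously practical: the PSC fails when $\omega(q^3-1)$ is large, but the offending prime factors of $q^2+q+1$ need not all be small enough to sieve on. The paper instead iterates over all $q$, factoring $q-1$ completely and $q^2+q+1$ up to $2^{20}$ with sliding-window sieves, then applies the PSC using only the crude bounds $s\le\lfloor\log_X u\rfloor$ and $\sum 1/p_i\le\lfloor\log_X u\rfloor/X$ on the unfactored cofactor. More importantly, your fallback ``direct search over $\F_{q^3}$'' is precisely the bottleneck, and the paper's key new idea---Lemma~\ref{lem:xi0}---is absent from your plan. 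That lemma constructs, for each parameter $d$, a family $\xi_k=g^k\xi_0$ with $\Tr(\xi_k)=g^k$ whose primitivity reduces to the coprimality condition $\gcd(3k+d,q-1)=1$, so that only the primitivity of $\xi_k+\xi_k^{-1}$ requires exponentiation in $\F_{q^3}$. Combined with the $\xi\leftrightarrow\xi^{-1}$ and (for $q\equiv1\pmod4$) $\xi\leftrightarrow-\xi$ symmetries, this yields roughly a fifteen-fold speedup; without it the $46896$ survivors (largest $q\approx4.7\times10^9$) would not be handled in the reported $13$ days on $64$ cores. You correctly flag this as ``where the novel computation enters'', but you do not supply it.
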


The main theoretical input that we need is the following result,
which Cohen and Gupta term the ``modified prime sieve criterion'' (MPSC).
\begin{theorem}[\cite{Cohen-Gupta}, Theorem~4.1]
Let $q$ be a prime power, and write $\rad(q^3-1)=kPL$, where
$k,P,L$ are positive integers. Define
\[
\delta=1-2\sum_{p\mid P}\frac1p,
\quad\varepsilon=\sum_{p\mid L}\frac1p,
\quad\theta=\frac{\varphi(k)}{k},
\quad\text{and}\quad
C_q=\begin{cases}
2&\text{if }2\mid q,\\
3&\text{if }2\nmid q.
\end{cases}
\]
Then $(q,3)\in\mathfrak{P}$ provided that
\begin{equation}\label{eq:sievehyp}
\theta^2\delta>2\varepsilon
\quad\text{and}\quad
q^{1/2}>
\frac{C_q\bigl(\theta^24^{\omega(k)}(2\omega(P)-1+2\delta)+\omega(L)-\varepsilon\bigr)}
{\theta^2\delta-2\varepsilon}.
\end{equation}
\end{theorem}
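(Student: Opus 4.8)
The plan is to realize the desired count by the standard orthogonality machinery for primitivity and trace, and then to control the resulting character sums by the prime sieve together with Weil's bound. Write $Q=q^3-1$ and let $N$ denote the number of $\xi\in\F_{q^3}^*$ for which $\xi$ and $\xi+\xi^{-1}$ are both primitive and $\Tr(\xi)=a$; the goal is to show $N>0$ under \eqref{eq:sievehyp}, which gives $(q,3)\in\mathfrak{P}$. Recall that an element is primitive precisely when it is $Q$-free, and that for each $e\mid\rad Q$ the characteristic function of the $e$-free elements is
$$\omega_e(\alpha)=\theta(e)\sum_{d\mid e}\frac{\mu(d)}{\varphi(d)}\sum_{\ord(\chi)=d}\chi(\alpha),\qquad\theta(e)=\frac{\varphi(e)}{e},$$
the inner sum running over multiplicative characters $\chi$ of $\F_{q^3}^*$ of exact order $d$. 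The trace condition is detected by the additive characters of $\F_q$: fixing a nontrivial $\psi$ and writing $\hat\psi=\psi\circ\Tr$ for its lift to $\F_{q^3}$, the indicator of $\Tr(\xi)=a$ is $q^{-1}\sum_{b\in\F_q}\psi(-ab)\hat\psi(b\xi)$. Substituting all three characteristic functions expresses $N$ as a triple sum over a pair $(\chi_1,\chi_2)$ of multiplicative characters and an additive parameter $b$, whose inner kernel is the mixed sum $\sum_{\xi}\chi_1(\xi)\chi_2(\xi+\xi^{-1})\hat\psi(b\xi)$.

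First I would isolate the main term, coming from $\chi_1=\chi_2=\chi_0$ and $b=0$; it contributes $\theta(Q)^2q^2$, the expected density times the size $q^2$ of the trace fibre. The remaining terms are errors. Rather than sum over all divisors of $\rad Q$, which would be prohibitive, I would apply Cohen's prime sieve relative to the factorization $\rad Q=kPL$: the sieve inequality treats $k$-freeness exactly for both conditions, handles the primes dividing $P$ by a Bonferroni-type inequality, and discards those dividing $L$ at a controlled cost. Because there are two primitivity conditions, each sieving prime $p\mid P$ enters with weight $2/p$ and each $p\mid L$ with weight $1/p$, which is exactly the origin of $\delta=1-2\sum_{p\mid P}1/p$ and $\varepsilon=\sum_{p\mid L}1/p$; the surviving main density is proportional to $\theta^2\delta-2\varepsilon$ (now with $\theta=\theta(k)$), so the hypothesis $\theta^2\delta>2\varepsilon$ is precisely what keeps this contribution positive.

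The heart of the argument is the estimation of the mixed character sums. For $k$-freeness the characters $\chi_1,\chi_2$ range over orders dividing the squarefree $k$, giving $2^{\omega(k)}$ choices each, hence the factor $4^{\omega(k)}$, each weighted by $\theta^2$; the sieving and $L$-primes contribute the further counts $2\omega(P)-1+2\delta$ and $\omega(L)-\varepsilon$. Each nontrivial kernel $\sum_\xi\chi_1(\xi)\chi_2(\xi+\xi^{-1})\hat\psi(b\xi)$ is estimated by Weil's theorem for mixed multiplicative--additive sums: writing $\xi+\xi^{-1}=(\xi^2+1)/\xi$, the summand is a character of the function field $\F_{q^3}(\xi)$ ramified at $0$, $\infty$ and the roots of $\xi^2+1$, and the associated $L$-function has degree controlled by the genus of the corresponding Kummer--Artin--Schreier cover, yielding a bound $\le C_q\,q^{3/2}$ per term. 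Dividing the assembled inequality $N\ge(\theta^2\delta-2\varepsilon)q^2-C_q\bigl(\theta^24^{\omega(k)}(2\omega(P)-1+2\delta)+\omega(L)-\varepsilon\bigr)q^{3/2}$ through by $q^{3/2}$ turns $N>0$ into exactly the second inequality of \eqref{eq:sievehyp}.

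The main obstacle I anticipate is pinning down the constant $C_q$ sharply, i.e.\ carrying out the genus computation for the mixed cover so that Weil's bound delivers $C_q=2$ in characteristic $2$ and $C_q=3$ in odd characteristic. The dichotomy reflects the factorization of $\xi^2+1$: in odd characteristic it splits into two distinct linear factors over $\overline{\F}_q$, adding ramification and hence genus, whereas in characteristic $2$ one has $\xi^2+1=(\xi+1)^2$, so $\chi_2(\xi+\xi^{-1})$ sees only a single ramification point from that factor and the genus, and therefore the constant, drops. Getting the ramification bookkeeping correct at $\xi=0$ and $\xi=\infty$, where the multiplicative characters, the pole of $\xi^{-1}$, and the Artin--Schreier pole of $\hat\psi(b\xi)$ all interact, is the delicate point; everything else is the routine assembly of main term, sieve weights, and character counts described above.
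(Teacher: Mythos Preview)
The paper does not contain a proof of this theorem: it is quoted verbatim as \cite[Theorem~4.1]{Cohen-Gupta} and used as a black box, so there is no in-paper argument to compare your proposal against.

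That said, your sketch is the standard route and is, in outline, the one taken in the cited reference: express the count by the indicator for $e$-freeness via multiplicative characters together with additive characters for the trace, apply the Cohen--Gupta prime sieve relative to $\rad(q^3-1)=kPL$ to replace full primitivity by $k$-freeness plus corrections from the primes in $P$ and $L$, and bound each nontrivial hybrid sum $\sum_\xi \chi_1(\xi)\chi_2(\xi+\xi^{-1})\hat\psi(b\xi)$ by Weil. Your identification of where $\delta$, $\varepsilon$, $\theta$, and $4^{\omega(k)}$ come from is correct, as is the explanation of the parity dichotomy behind $C_q$ via the factorisation of $\xi^2+1$.

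Two places where your write-up would need tightening before it becomes an actual proof rather than a plan: first, the bookkeeping that produces the precise numerator $\theta^2 4^{\omega(k)}(2\omega(P)-1+2\delta)+\omega(L)-\varepsilon$ is more delicate than you indicate, since the sieve identity mixes terms where one coordinate is sieved and the other is not, and the $L$-primes are handled by a separate direct estimate rather than by the same inclusion--exclusion; second, you pass over the averaging in $b\in\F_q$ a bit quickly --- one has to treat $b=0$ and $b\ne0$ separately, and it is the combination of the $q^{-1}$ normalisation with the $q-1$ nonzero values of $b$ (each contributing a genuinely mixed sum) that leaves the error at size $q^{3/2}$ rather than $q^{5/2}$. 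None of this is a gap in the approach, only in the level of detail, and the source \cite{Cohen-Gupta} carries it out in full.
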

In practice we take $k$ to be the product of the first few prime factors of
$q^3-1$ and $L$ the product of the last few. In particular, taking $L=1$
we recover the simpler ``prime sieve criterion'' (PSC),
\cite[Theorem~3.2]{Cohen-Gupta}, in which the hypothesis
\eqref{eq:sievehyp} reduces to
\[
\delta>0
\quad\text{and}\quad
q^{1/2}>C_q4^{\omega(k)}\left(\frac{2\omega(P)-1}{\delta}+2\right).
\]
We will use this simpler criterion in most of what follows.

\section{Proof of Theorem~\ref{thm:main}}
\subsection{Applying the modified prime sieve}
Thanks to \cite[Theorem~7.2]{BCLT}, to complete the proof
of Theorem~\ref{thm:main} for $n=3$ it suffices to check all
$q<8\times10^{12}$.  To reduce this to a manageable list of
candidates, we seek to apply the MPSC. For prime $q<10^{10}$ and
composite $q<8\times10^{12}$ we do this directly with a straightforward
implementation in \texttt{PARI/GP} \cite{PARI}, first trying the PSC,
and then the general MPSC when necessary.

For larger primes $q$ the direct approach becomes too time-consuming,
mostly because of the time taken to factor $q^3-1$. To remedy this we
developed and coded in \texttt{C} the following novel strategy that makes use of
a partial factorisation. Using sliding window sieves we find the complete
factorisation of $q-1$, as well as all prime factors of $q^2+q+1$ below
$X=2^{20}$. Let $u=(q^2+q+1)\prod_{p<X}p^{-\ord_p(q^2+q+1)}$ denote the
remaining unfactored part. If $u<X^2$ then $u$ must be 1 or a prime number, so we have
enough information to compute the full prime factorisation of $q^3-1$
and can apply the PSC directly.

Otherwise, let $\{p_1,\ldots,p_s\}$ be the set of prime
factors of $u$. Although the $p_i$ are unknown to us, we can bound
their contribution to the PSC via
\[
s\le\lfloor\log_X{u}\rfloor
\quad\text{and}\quad
\sum_{i=1}^s\frac1{p_i}\le\frac{\lfloor\log_X{u}\rfloor}{X}.
\]
We then check the PSC with all possibilities for $P$ divisible by
$p_1\cdots p_s$. This sufficed to rule out all primes
$q\in[10^{10},8\times10^{12}]$ in less than a day using one 16-core
machine.

The end result is a list of $46896$ values of $q$ that are not ruled
out by the MPSC, the largest of which is $4708304701$. Of these, $483$
are composite, the largest being $37951^2=1440278401$. We remark that
with only the PSC there would be $87157$ exceptions, so using the MPSC
reduces the number of candidates by 46\%, and reduces the time taken to
test the candidates (see \S\ref{sec:testing}) by an estimated 61\%.  This represents
an instance when the MPSC makes a substantial and not merely an incidental
contribution  to a computation.

\subsection{Testing the possible exceptions}\label{sec:testing}
Next we aim to test each putative exception directly, by exhibiting,
for each $a\in\F_q$, a primitive pair $(\xi,\xi+\xi^{-1})$ satisfying
$\Tr(\xi)=a$. Although greatly reduced from the initial set of all
$q<8\times10^{12}$ from \cite[Theorem~7.2]{BCLT}, the candidate list is
still rather large, so we employed an optimised search strategy based
on the following lemma.
\begin{lemma}\label{lem:xi0}
Let $g\in\F_q^\times$ be a primitive root, let $d\in\Z$, and set
$P=x^3-x^2+g^{d-1}x-g^d\in\F_q[x]$. Suppose $P$ is irreducible,
let $\xi_0=x+(P)$ be a root of $P$ in $\F_q[x]/(P)\cong\F_{q^3}$,
and assume that $\xi_0$ is not a $p$th power in $\F_{q^3}$ for any
$p\mid q^2+q+1$. Then for any $k\in\Z$ such that $\gcd(3k+d,q-1)=1$,
$\xi_k:=g^k\xi_0$ is a primitive root of $\F_{q^3}^\times$ satisfying
$\Tr(\xi_k)=g^k$ and $\Tr(\xi_k^{-1})=g^{-k-1}$.
\end{lemma}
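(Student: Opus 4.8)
The plan is to treat the two trace identities and the primitivity claim separately: the trace identities follow straight from Vieta's formulas, and primitivity from the cyclic structure of $\F_{q^3}^\times$ together with the norm map $N=N_{\F_{q^3}/\F_q}$.

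First I would use that, since $P$ is irreducible of degree $3$, it is the minimal polynomial of $\xi_0$ over $\F_q$ and its roots are the conjugates $\xi_0,\xi_0^q,\xi_0^{q^2}$; in particular $\xi_0\ne0$ since $P(0)=-g^d\ne0$. Reading off the coefficients of $P=x^3-x^2+g^{d-1}x-g^d$ gives the three elementary symmetric functions of the conjugates: $\xi_0+\xi_0^q+\xi_0^{q^2}=1$, then $\xi_0^q\xi_0^{q^2}+\xi_0\xi_0^{q^2}+\xi_0\xi_0^q=g^{d-1}$, and $\xi_0\xi_0^q\xi_0^{q^2}=g^d$. The first says $\Tr(\xi_0)=1$; dividing the second by the third says $\Tr(\xi_0^{-1})=g^{d-1}/g^d=g^{-1}$; and the third says $N(\xi_0)=g^d$. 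Since $\Tr$ is $\F_q$-linear and $g^k,g^{-k}\in\F_q$, I then get $\Tr(\xi_k)=g^k\Tr(\xi_0)=g^k$ and, using $\xi_k^{-1}=g^{-k}\xi_0^{-1}$, $\Tr(\xi_k^{-1})=g^{-k}\Tr(\xi_0^{-1})=g^{-k-1}$.

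For primitivity I would use the standard criterion that $\xi_k$ generates $\F_{q^3}^\times$ if and only if $\xi_k\notin(\F_{q^3}^\times)^p$ for every prime $p\mid q^3-1$, and split the primes according to whether $p\mid q-1$ or $p\mid q^2+q+1$ (since $q^3-1=(q-1)(q^2+q+1)$, each such $p$ satisfies at least one). If $p\mid q^2+q+1$, then because $\F_{q^3}^\times$ is cyclic and $p(q-1)\mid q^3-1$, the unique subgroup of order $q-1$, namely $\F_q^\times$, lies inside the unique subgroup of index $p$, namely $(\F_{q^3}^\times)^p$; hence $g^k\in(\F_{q^3}^\times)^p$, so $\xi_k=g^k\xi_0$ is a $p$th power exactly when $\xi_0$ is, and by hypothesis it is not. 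If instead $p\mid q-1$, I would apply the norm: $N$ carries $(\F_{q^3}^\times)^p$ into $(\F_q^\times)^p$, and $N(\xi_k)=N(g^k)N(\xi_0)=g^{3k}\cdot g^d=g^{3k+d}$; since $g$ is a primitive root of $\F_q^\times$ and $p\nmid 3k+d$ (because $\gcd(3k+d,q-1)=1$), we have $g^{3k+d}\notin(\F_q^\times)^p$, and therefore $\xi_k\notin(\F_{q^3}^\times)^p$. These two cases exhaust the prime divisors of $q^3-1$, so $\xi_k$ is primitive.

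I expect no genuine obstacle here; the only point meriting a moment's care is the overlap prime $p=3$ when $q\equiv1\pmod3$ (so that $3$ divides both $q-1$ and $q^2+q+1$), but there the $p\mid q-1$ branch applies unchanged and settles it, so no separate treatment is needed. Everything else is bookkeeping with Vieta's formulas and the subgroup lattice of a cyclic group.
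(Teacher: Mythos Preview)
Your proof is correct and follows essentially the same route as the paper's. The trace identities come from Vieta's relations (the paper equivalently writes down the minimal polynomial of $\xi_0^{-1}$), and primitivity is checked by splitting primes $p\mid q^3-1$ into the cases $p\mid q^2+q+1$ and $p\mid q-1$, handling the first via $g^k\in(\F_{q^3}^\times)^p$ and the second via the norm $N(\xi_k)=g^{3k+d}$, exactly as in the paper (which phrases both cases as explicit computations of $\xi_k^{(q^3-1)/p}$).
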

\begin{proof}
Note that $\xi_0$ has trace $1$ and norm $g^d$, so $\xi_k$ has trace
$g^k$ and norm $\xi_k^{q^2+q+1}=g^{3k+d}$. Furthermore,
\[
\xi_0^3-\xi_0^2+g^{d-1}\xi_0-g^d=0\implies
\xi_0^{-3}-g^{-1}\xi_0^{-2}+g^{-d}\xi_0^{-1}-g^{-d}=0,
\]
so $\Tr(\xi_k^{-1})=g^{-k}\Tr(\xi_0^{-1})=g^{-k-1}$.

Let $p$ be a prime dividing $q^3-1$. If $p\mid q^2+q+1$ then
\[
\xi_k^{\frac{q^3-1}{p}}=(g^k\xi_0)^{\frac{q^3-1}{p}}
=g^{k\frac{q^2+q+1}{p}(q-1)}\xi_0^{\frac{q^3-1}{p}}
=\xi_0^{\frac{q^3-1}{p}}\ne1,
\]
since $\xi_0$ is not a $p$th power. On the other hand, if $p\mid q-1$ then
\[
\xi_k^{\frac{q^3-1}{p}}=\xi_k^{(q^2+q+1)\frac{q-1}{p}}
=g^{(3k+d)\frac{q-1}{p}}\ne1,
\]
since $\gcd(3k+d,q-1)=1$. Hence $\xi_k$ is a primitive root.
\end{proof}
\begin{remark}
If $q\equiv1\pmod3$ then the hypotheses of Lemma~\ref{lem:xi0} imply that
$3\nmid d$.
Hence there always exists $k$ such that $\gcd(3k+d,q-1)=1$, and this
condition is equivalent to $\gcd(k+\bar{d},r)=1$, where
$r=\prod_{\substack{p\mid q-1\\p\ne3}}p$ and $3\bar{d}\equiv d\pmod{r}$.
\end{remark}

Thanks to the symmetry between $\xi$ and $\xi^{-1}$, if we find
a $\xi$ that works for a given $g^k$ via Lemma~\ref{lem:xi0}, we also
obtain a solution for $g^{-k-1}$. Furthermore, when $q\equiv1\pmod4$,
$\alpha\in\F_{q^3}^\times$ is primitive if and only if $-\alpha$
is primitive, and thus a solution for $g^k$ yields one for $-g^k$ by
replacing $\xi$ with $-\xi$.  Therefore, to find a solution for every
$a\in\F_q^\times$ it suffices to check $k\in\{0,\ldots,K-1\}$, where
\[
K=\begin{cases}
\lfloor{q/4}\rfloor&\text{if }q\equiv1\pmod4,\\
\lfloor{q/2}\rfloor&\text{otherwise}.
\end{cases}
\]
Note that this does not handle $a=0$, for which we conduct a separate
search over randomly chosen $\xi\in\F_{q^3}$ of trace $0$.

Our strategy for applying Lemma~\ref{lem:xi0} is as follows. First
we choose random values of $d\pmod{q-1}$ until we find sufficiently
many ($2^{10}$ in our implementation) satisfying the hypotheses. (We
allow repetition among the $d$ values, but for some small $q$
there are no suitable $d$, in which case we fall back on a
brute-force search strategy.) For each $d$ we precompute and
store $\bar{d}=d/3\mod{r}$ and $g^{-d}$, so we can quickly compute
$\xi_k+\xi_k^{-1}=a^{-1}g^{-d}(\xi_0^2-\xi_0)+a\xi_0+a^{-1}g^{-1}$
given the pair $(a,a^{-1})=(g^k,g^{-k})$. Then for each $k$ we run
through the precomputed values of $d$ satisfying $\gcd(k+\bar{d},r)=1$,
and check whether $(\xi_k+\xi_k^{-1})^{(q^3-1)/p}\ne1$ for every
prime $p\mid q^3-1$.

Thanks to Lemma~\ref{lem:xi0}, our test for whether $\xi_k$ itself is
a primitive root is very fast (just a coprimality check\footnote{In
fact, since we run through values of $k$ in linear order, we
could avoid computing the gcd by keeping track of $k\mod{p}$ and
$-\bar{d}\mod{p}$ for each prime $p\mid r$, and looking for collisions
between them. However, in our numerical tests this gave only a small
reduction in the overall running time.}), so we save a factor of roughly
$(q^3-1)/\varphi(q^3-1)$ over a more naive approach that tests both
$\xi$ and $\xi+\xi^{-1}$. Combined with the savings from symmetries
noted above, we estimate that the total running time of our algorithm
over the candidate set is approximately $1/15$th of what it would be
with a direct approach testing random $\xi\in\F_{q^3}$ of trace $a$
for every $a\in\F_q$.

We are not aware of any reason why this strategy should fail
systematically, though we observed that for some fields of small
characteristic (the largest $q$ we encountered is $3^{12}=531441$),
$\xi_k+\xi_k^{-1}$ is always a square for a particular $k$. Whenever
this occurred we fell back on a more straightforward randomised search
for $\xi$ of trace $g^k$ and $g^{-k-1}$.

We used \texttt{PARI/GP} \cite{PARI} to handle the brute-force search
for $q\le211$, as well as the remaining composite $q$ with a basic
implementation of the above strategy. For prime $q>211$ we used Andrew
Sutherland's fast \texttt{C} library \texttt{ff\_poly} \cite{ffpoly}
for arithmetic in $\F_q[x]/(P)$, together with an implementation
of the Bos--Coster algorithm for vector addition chains described
in \cite[\S4]{deRooij}. The total running time for all parts was
approximately 13 days on a computer with 64 cores (AMD Opteron
processors running at 2.5~GHz). The same system handles the largest
value $q=4708304701$ in approximately one hour.

\bibliographystyle{amsplain}
\providecommand{\bysame}{\leavevmode\hbox to3em{\hrulefill}\thinspace}
\providecommand{\MR}{\relax\ifhmode\unskip\space\fi MR }
\providecommand{\MRhref}[2]{%
  \href{http://www.ams.org/mathscinet-getitem?mr=#1}{#2}
}
\providecommand{\href}[2]{#2}

\end{document}